\newtheorem{thrm}{Theorem}[section]
\newtheorem{cor}[thrm]{Corollary}
\newtheorem{lemma}[thrm]{Lemma}
\newtheorem{prop}[thrm]{Proposition}
\theoremstyle{definition}
\newtheorem{rem}[thrm]{Remark}
\DeclareMathOperator{\dens}{d}
\DeclareMathOperator{\intt}{int}
\newcommand{\NN}{\mathbb{N}}
\newcommand{\RR}{\mathbb{R}}
\newcommand{\KK}{\mathcal{K}}
\begin{document}

\title[Characterizing function spaces which have the property (B)]{Characterizing function spaces which have the property (B) of Banakh}
\author[M.\ Krupski]{Miko\l aj Krupski}
\address{Universidad de Murcia, Departamento de Matem\'{a}ticas, Campus de Espinardo 30100 Murcia, Spain\\ and \\ Institute of Mathematics\\ University of Warsaw\\ ul. Banacha 2\\
02--097 Warszawa, Poland }
\email{mkrupski@mimuw.edu.pl}

\author[K.\ Kucharski]{Kacper Kucharski}
\address{Institute of Mathematics\\
University of Warsaw\\ Banacha 2\newline 02--097 Warszawa\\
Poland\\}
\email{k.kucharski6@uw.edu.pl}

\author[W.\ Marciszewski]{Witold Marciszewski}
\address{Institute of Mathematics\\
University of Warsaw\\ Banacha 2\newline 02--097 Warszawa\\
Poland\\
ORCID identifier: 0000-0003-3384-5782}
\email{wmarcisz@mimuw.edu.pl}

\thanks{All authors were partially supported by the NCN
(National Science Centre, Poland) research grant no.\ 2020/37/B/ST1/02613.
The first author was also supported by Fundaci\'{o}n S\'{e}neca - ACyT Regi\'{o}n de Murcia project 21955/PI/22, Agencia Estatal de Investigación (Government of Spain)  Project PID2021-122126NB-C32 funded by
MICIU/AEI /10.13039/\\
501100011033/ and FEDER A way of making Europe, and European Union - NextGenerationEU funds through Mar\'{i}a Zambrano fellowship.
}

\date{\today}

\begin{abstract}
A topological space $Y$ has the property (B) of Banakh if there is a countable family $\{A_n:n\in \NN\}$ of closed nowhere dense subsets of $Y$ absorbing all compact subsets of $Y$. In this note we show that the space $C_p(X)$ of continuous real--valued functions on a Tychonoff space $X$ with the topology of pointwise convergence, fails to satisfy the property (B) if and only if the space $X$ has the following property $(\kappa)$: every sequence of disjoint finite subsets of $X$ has a subsequence with point--finite open expansion. Additionally, we provide an analogous characterization for the compact--open topology on $C(X)$. Finally, we give examples of Tychonoff spaces $X$ whose all bounded subsets are finite, yet $X$ fails to have the property $(\kappa)$. This answers a question of Tkachuk.
\end{abstract}

\subjclass[2020]{Primary: 54C35, 54E52, Secondary: 54A10}

\keywords{function space, pointwise convergence topology, compact-open topology, property (B), Banakh property, property $(\kappa)$, $\kappa$-Fr\'echet-Urysohn}

\maketitle

\section{Introduction}

All spaces under consideration are assumed to be Tychonoff.
A space $X$ has \textit{the property (B)} if there is a countable family $\{A_n:n\in \omega\}$ of closed nowhere dense subsets of $X$ such that for any compact subset $K$ of $X$, there is $n\in \omega$ with $K\subseteq A_n$. In particular, such a family $\{A_n:n\in \omega\}$ must cover $X$ and hence the property (B) of $X$ implies that $X$ is meager in itself. This property was introduced in \cite{KM} (see \cite[Definition 5.1]{KM}) as a possible tool for distinguishing the pointwise and the weak topology on the set $C(K)$ of continuous real--valued functions on an infinite compact space $K$. Specifically, it is easy to see that for any infinite--dimensional Banach space $X$, the space $X$ equipped with the weak topology is always (B) (see \cite[Proposition 5.2]{KM}). Given a space $X$, by $C_p(X)$ we denote the space of continuous real--valued functions on $X$ equipped with the pointwise topology. It was established in \cite[Theorem 5.9]{KM} that if $K$ is compact, then $C_p(K)$ enjoys the property (B) if and only if $K$ is not scattered. However, a characterization of spaces $X$ whose function space $C_p(X)$ has the property (B) was left open (see \cite[Problem 5.10]{KM}). In this note we settle this question. In order to formulate our result we first recall some notation.

 Given a family $\mathcal{A}$ of subsets of a space $X$, we say that $\mathcal{A}$ is \textit{point--finite} if for each $x\in X$ the collection $\{A\in \mathcal{A}:x\in A\}$ is finite. The family $\mathcal{A}$ is \textit{strongly point--finite} (\textit{strongly discrete}) there is a point--finite (discrete) family $\{U_A:A\in \mathcal{A}\}$ of open subsets of $X$ satisfying $A\subseteq U_A$ for every $A\in \mathcal{A}$.
Following \cite{S}, we say that a space $X$ has \textit{the property $(\kappa)$} if every pairwise disjoint sequence of finite subsets of $X$ has an infinite strongly point--finite subsequence. The main result of this note reads as follows.

\begin{thrm}\label{main1}
For any space $X$ the following conditions are equivalent:
\begin{enumerate}[(i)]
\item The space $X$ has property $(\kappa)$.
\item The space $C_p(X)$ does not have the property $(B)$.
\end{enumerate}
\end{thrm}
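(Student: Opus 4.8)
The plan is to prove the two implications separately. The implication $\mathrm{(ii)}\Rightarrow\mathrm{(i)}$ is best done in contrapositive form: from a pairwise disjoint sequence of finite subsets of $X$ with no strongly point–finite subsequence, produce a concrete witness of property $(B)$ in $C_p(X)$. The implication $\mathrm{(i)}\Rightarrow\mathrm{(ii)}$ is best done directly: assuming $X$ has $(\kappa)$, show that \emph{no} countable family of closed nowhere dense subsets of $C_p(X)$ absorbs all compacta. The first half is a short explicit construction; the second is the technical core, and is where $(\kappa)$ is really used.

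\smallskip\noindent\emph{From $\neg(\kappa)$ to property $(B)$.} Let $(F_n)_{n\in\NN}$ be pairwise disjoint finite sets with no strongly point–finite subsequence; deleting empty terms (an infinite subsequence of empty sets is trivially strongly point–finite) we may assume each $F_n\neq\emptyset$. Put
\[
A_m=\bigcap_{n\ge m}\ \bigcup_{x\in F_n}\{f\in C_p(X):|f(x)|\le m\}.
\]
Each $\{f:|f(x)|\le m\}$ is the preimage of $[-m,m]$ under evaluation at $x$, hence closed in $C_p(X)$, so $A_m$ (a finite union followed by an intersection) is closed. Its complement is dense: given a nonempty basic open set $[g;x_1,\dots,x_k;\varepsilon]=\{f:|f(x_i)-g(x_i)|<\varepsilon,\ i\le k\}$, by disjointness only finitely many $F_n$ meet $\{x_1,\dots,x_k\}$, so pick $n\ge m$ with $F_n\cap\{x_1,\dots,x_k\}=\emptyset$; since $X$ is Tychonoff the finite set $\{x_1,\dots,x_k\}\cup F_n$ is $C$–embedded, so there is $f\in C(X)$ with $f(x_i)=g(x_i)$ and $f\equiv m+1$ on $F_n$, and this $f$ is in $[g;x_1,\dots,x_k;\varepsilon]\setminus A_m$. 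Thus each $A_m$ is nowhere dense. Finally, if $K\subseteq C_p(X)$ is compact and $K\not\subseteq A_m$ for every $m$, pick $f_m\in K\setminus A_m$ and $n_m\ge m$ with $|f_m|>m$ on $F_{n_m}$; after thinning $(n_m)$ is strictly increasing, so $(F_{n_m})$ is a subsequence of $(F_n)$, and the open sets $U_m:=\{x\in X:|f_m(x)|>m\}\supseteq F_{n_m}$ form a point–finite family, since compacta in $C_p(X)$ are pointwise bounded and so $x\in U_m$ for only finitely many $m$. This makes $(F_{n_m})$ strongly point–finite — a contradiction. Hence $\{A_m\}$ absorbs all compacta (in particular $\bigcup_m A_m=C_p(X)$, as singletons are compact), so $C_p(X)$ has property $(B)$.

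\smallskip\noindent\emph{From $(\kappa)$ to $\neg(B)$.} Let $\{A_m\}$ be an arbitrary countable family of closed nowhere dense subsets of $C_p(X)$. If it fails to cover $C_p(X)$ a singleton is not absorbed, so assume it covers; replacing $A_m$ by $\overline{A_0\cup\dots\cup A_m}$ we may assume it is increasing. We build recursively integers $m_0<m_1<\cdots$, pairwise disjoint finite sets $F_n\subseteq X$, and functions $h_n\in C_p(X)$ with $h_n\notin A_{m_n}$, arranging $h_n$ to be ``essentially supported near $F_n$''. At step $n$: choose $h_n$ in the dense open set $C_p(X)\setminus A_{m_n}$; openness yields a finite $Z_n\subseteq X$ and $\delta_n>0$ with $[h_n;Z_n;\delta_n]\subseteq C_p(X)\setminus A_{m_n}$; then replace $h_n$ by $h_n\cdot\rho_n$ with $\rho_n\colon X\to[0,1]$ continuous, $\equiv 1$ on $Z_n$ and supported in a small open neighbourhood of $Z_n$ — this leaves $h_n$ unchanged on $Z_n$, hence still off $A_{m_n}$, while $\supp h_n$ shrinks to that neighbourhood — and let $F_n$ be the genuinely new points of $Z_n$. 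Now apply property $(\kappa)$ to $(F_n)$ to obtain an infinite $M\subseteq\NN$ and a point–finite open family $\{V_n:n\in M\}$ with $F_n\subseteq V_n$; re–running the trimming along $M$ with the $V_n$ (having chosen the $Z_n$, hence the $F_n$, compatibly — see below) yields $h_n$ with $\supp h_n\subseteq V_n$ for $n\in M$, so the family of supports is point–finite and $h_n\to\mathbf 0$ in $C_p(X)$ along $M$. Then $K:=\{h_n:n\in M\}\cup\{\mathbf 0\}$ is compact, and $K\subseteq A_m$ would force, for $n\in M$ with $m_n\ge m$, the impossible $h_n\in A_m\subseteq A_{m_n}$. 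So $\{A_m\}$ does not absorb $K$, and $C_p(X)$ fails $(B)$.

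\smallskip\noindent The main obstacle is the coordination in the second half. Avoiding a nowhere dense $A_{m_n}$ only constrains $h_n$ on the finite set $Z_n$, so after shrinking $\supp h_n$ one must still reckon with the values of $h_n$ on points committed at earlier stages; one must therefore set up the recursion — the choice of $h_n$, of the control sets $Z_n$, and of the trimming neighbourhoods — so that the new points $F_n$ stay disjoint from everything chosen before \emph{and} so that re–trimming along the expansion $\{V_n\}$ supplied by $(\kappa)$ does not reintroduce old points into the supports (which would destroy the convergence $h_n\to\mathbf 0$). Making ``nowhere–dense avoidance'' compatible with ``point–finite supports'' is the crux, and is exactly the place where property $(\kappa)$ of $X$ is indispensable.
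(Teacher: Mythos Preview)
Your $(\mathrm{ii})\Rightarrow(\mathrm{i})$ argument is correct and is the paper's direct proof repackaged contrapositively: your sets $A_m$ are essentially the complements of the open dense sets the paper writes down, and the extraction of a point--finite open expansion from a non--absorbed compact $K$ is the same pointwise--boundedness trick.

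For $(\mathrm{i})\Rightarrow(\mathrm{ii})$ there is a real gap. In your recursion you pick $h_n$ \emph{anywhere} in $C_p(X)\setminus A_{m_n}$; nothing forces $h_n$ to be small at the previously committed points $Z_0\cup\cdots\cup Z_{n-1}$. After applying $(\kappa)$ you obtain open $V_n\supseteq F_n$, but $V_n$ need not contain the old points of $Z_n$; re--trimming with a cutoff supported in $V_n$ therefore changes the value of $h_n$ at those old points and may push the modified function back into $A_{m_n}$ (this, not ``reintroducing old points into the supports'', is the actual danger --- once $\supp h_n\subseteq V_n$ point--finiteness already guarantees $h_n\to\mathbf{0}$). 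Your last paragraph names the obstacle but does not overcome it: ``chosen compatibly --- see below'' defers to a passage that only restates the difficulty. The missing idea, and precisely what the paper does, is to build smallness on old points into the recursion itself: take the $Z_n$ increasing and choose $h_n$ in $N(\mathbf{0},\,Z_{n-1},\,1/n)\setminus A_{m_n}$, not merely in $C_p(X)\setminus A_{m_n}$. Then, after $(\kappa)$ supplies $V_k\supseteq F_{n_k}=Z_{n_k}\setminus Z_{n_k-1}$, one has $(X\setminus V_k)\cap Z_{n_k}\subseteq Z_{n_k-1}$, where $|h_{n_k}|<1/n_k$ by construction; a Tietze--type interpolation (the paper's Lemma~\ref{lemma_modification}) now produces $g_k$ equal to $h_{n_k}$ on \emph{all} of $Z_{n_k}$ --- hence still outside $A_{m_{n_k}}$ --- with $|g_k|<1/n_k$ off $V_k$, whence $g_k\to\mathbf{0}$. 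Note that bare cutoff multiplication does not achieve this, since you must match $h_{n_k}$ exactly on all of $Z_{n_k}$, not just on $F_{n_k}$.
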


Theorem \ref{main1} should be compared with the following well--known result, proved independently by van Douwen, Pytkeev, and Tkachuk, that characterizes spaces $X$ for which $C_p(X)$ is a Baire space (see \cite[Theorem 6.4.3]{vM} and \cite[Proposition 1.27]{HM}).

\begin{thrm}
 For any space $X$ the following conditions are equivalent:
 \begin{enumerate}[(i)]
  \item Every pairwise disjoint sequence of finite subsets of $X$ has an infinite strongly discrete subsequence.
  \item The space $C_p(X)$ is a Baire space
  \item The space $C_p(X)$ is not meager in itself.
 \end{enumerate}

\end{thrm}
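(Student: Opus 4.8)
The plan is to run the easy part of the cycle cheaply and then isolate the genuine content, the equivalence of (i) with the meagerness of $C_p(X)$. The implication (ii)$\Rightarrow$(iii) is immediate, since a nonempty Baire space is never meager in itself and $C_p(X)$ contains the constant functions. For (iii)$\Rightarrow$(ii) I would exploit that $C_p(X)$ is a topological vector space: if it is not Baire then some nonempty open set is meager; translating it to a meager neighbourhood $W$ of the origin and using that $W$ is absorbing together with the fact that each homothety $f\mapsto kf$ is a homeomorphism, one gets $C_p(X)=\bigcup_{k\in\NN}kW$, so $C_p(X)$ is meager. Hence (ii)$\Leftrightarrow$(iii), and it remains to show that (i) is equivalent to $C_p(X)$ being nonmeager.

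The device underlying both remaining implications is a reformulation of strong discreteness, which I would isolate as a lemma: a pairwise disjoint sequence of finite sets $\{F_n:n\in M\}$ is strongly discrete if and only if there is $f\in C(X)$ with $f(x)\in(5n-1,5n+1)$ for every $x\in F_n$ and every $n\in M$. The forward direction uses a discrete open expansion $\{U_n\}$ to build, via bump functions $\varphi_n$ with $\varphi_n\equiv 1$ on $F_n$ and $\supp\varphi_n\subseteq U_n$, the globally continuous function $\sum_n 5n\,\varphi_n$ (continuity is where discreteness is essential, as locally only one summand is nonzero); the converse recovers a discrete expansion as the preimages $f^{-1}((5n-2,5n+2))$, which are pairwise disjoint and discrete because the centres $5n$ are $5$-separated.

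Granting the lemma, I would prove $\neg$(i)$\Rightarrow C_p(X)$ not Baire as follows. Fix a bad sequence $(F_n)$. For each $n\in\NN$ put $D_n=\{f\in C_p(X):\exists j\ge n,\ f(x)\in(5j-1,5j+1)\text{ for all }x\in F_j\}$. Each $D_n$ is open, and it is dense: given a basic neighbourhood with finite support $S$, choose $j\ge n$ with $F_j\cap S=\emptyset$ (possible since the $F_j$ are disjoint) and use that the finite set $S\cup F_j$ is $C$-embedded to realise the prescribed values on $S$ and the value $5j$ on $F_j$. Were $C_p(X)$ Baire, $\bigcap_n D_n$ would be nonempty, yielding $f$ with $f(x)\in(5j-1,5j+1)$ on $F_j$ for infinitely many $j$; by the lemma that infinite subsequence is strongly discrete, contradicting badness. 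Conversely, for $C_p(X)$ meager $\Rightarrow\neg$(i), writing $C_p(X)=\bigcup_n A_n$ with $A_n$ closed nowhere dense and increasing, I would recursively pick pairwise disjoint finite sets $F_n$ and basic neighbourhoods $B_n$ supported on $F_n$ (over a fixed reference value $0$ on the previously used points) with $B_n\cap A_n=\emptyset$; this is possible because each $A_n$ is nowhere dense. No subsequence $\{F_n:n\in M\}$ can be strongly discrete: the engine behind the lemma would produce a continuous $f$ realising on each $F_n$ the centre of $B_n$, so $f\in B_n$ and hence $f\notin A_n$ for all $n\in M$; but $f\in A_{n_0}$ for some $n_0$ and $A_{n_0}\subseteq A_n$ for $n\ge n_0$, a contradiction.

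The main obstacle is the forward direction of the lemma and its use in the meager implication: turning a purely combinatorial discrete expansion into a single globally continuous function with prescribed behaviour on all of the $F_n$ \emph{simultaneously}. This is exactly where complete regularity and the discreteness of the expansion are indispensable, and it is also where the subsequence-versus-full-sequence tension lives, since one must realise the values along the whole selected subsequence at once rather than one step at a time. A secondary technical nuisance is arranging the $F_n$ in the meager implication to be pairwise disjoint while keeping the retained constraints of the $B_n$ mutually consistent; I would handle this by fixing all reference values to $0$ and always passing to fresh coordinates, so that the finitely many inherited constraints never conflict.
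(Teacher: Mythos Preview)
The paper does not prove this theorem; it is quoted as a known result of van Douwen, Pytkeev and Tkachuk, with references to \cite{vM} and \cite{HM}. So there is no argument in the paper to compare your proposal against, and it has to be judged on its own merits.

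Your handling of (ii)$\Leftrightarrow$(iii) is correct and standard, and the implication $\neg$(i)$\Rightarrow\neg$(ii) via the dense open sets $D_n$ is fine. The lemma tying strong discreteness of $\{F_n:n\in M\}$ to the existence of a single continuous function separating the $F_n$ is the right engine (a minor point: for the converse direction your neighbourhoods $f^{-1}((5n-2,5n+2))$ are pairwise disjoint but the radius you implicitly use to verify discreteness should be taken smaller than $1$, since the gaps between consecutive intervals have length $1$).

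There is, however, a genuine gap in the direction you phrase as ``$C_p(X)$ meager $\Rightarrow\neg$(i)''. You want $B_n$ ``supported on $F_n$'' with $F_n$ fresh, so that hitting the centre on $F_n$ alone yields $f\in B_n$. But one cannot in general keep the support of $B_n$ off the earlier coordinates: if, say, $A_n=\{g\in C_p(X):g(x_0)=0\}$ for a fixed point $x_0$, this set is closed and nowhere dense, yet every basic open set disjoint from it must constrain the value at $x_0$. Hence $B_n$ will in general have support $S_n\supseteq S_{n-1}$, and for $n\in M$ and $x\in F_i$ with $i\in M$, $i<n$, your $f$ takes the value $h_i(x)$, the centre of $B_i$ at $x$; this number is completely uncontrolled, so $|f(x)-h_n(x)|<\delta_n$ can certainly fail. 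Your ``reference value $0$'' device makes $h_n|_{S_{n-1}}$ small, but the obstruction is $h_i|_{F_i}$ for $i\in M$, not $h_n|_{S_{n-1}}$. The standard proofs (e.g.\ in \cite{vM}) resolve this by building \emph{nested} neighbourhoods with $\overline{B_{n+1}}\subseteq B_n$ and rapidly shrinking radii, so that the centres converge on $\bigcup_n S_n$, and then extending the limit values to a continuous function on $X$ using the discrete expansion of the selected subsequence; that extension step, which must also account for the $F_i$ with $i\notin M$, is where the real work of this implication lies, and your sketch underestimates it.
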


A result analogous to Theorem \ref{main1} is proved for $C_k(X)$, i.e., the space of continuous real--valued functions on $X$ endowed with the compact--open topology (see Section 4 below).

All countable spaces as well as all scattered spaces enjoy the property $(\kappa)$ (see \cite[Corollary 3.8]{S}). In \cite{Tk2} Tkachuk asked if a space $X$ has the property $(\kappa)$ provided all of its bounded subsets are finite (see \cite[Question 4.2]{Tk2}). In Section 5 we provide a negative answer to this question.

\section{Notation}
For a space $X$, we denote by $C(X)$ the set of all real--valued continuous functions on $X$. The constant function equal to $0$ will be denoted by $\mathbf{0}$. By $C_p(X)$ we denote the set $C(X)$ endowed with the pointwise topology, i.e., a basic neighborhood of any $f\in C_p(X)$ is of the form
$$N(f,S,\varepsilon)=\{g\in C_p(X): \forall x\in S\;\;|f(x)-g(x)|<\varepsilon\},$$
where $S$ is a finite subset of $X$ and $\varepsilon>0$.

We write $C_k(X)$ if the compact--open topology on $C(X)$ is considered. The base of the space $C_k(X)$ consists of the sets
$$M(f,K,\varepsilon)=\{g\in C_k(X):\forall x\in K\;\; |f(x)-g(x)|<\varepsilon\},$$
where $f\in C_k(X)$, $\varepsilon>0$ and $K$ is a compact subset of $X$.

A space $X$ is \textit{$\kappa$--Fr\'echet--Urysohn} if for every open subset $U$ of $X$ and every $x\in \overline{U}$, there is a sequence $\{x_n:n\in \omega\}\subseteq U$ that converges to $x$. The letter $\kappa$ here is just a letter and has nothing to do with cardinal numbers. Clearly, every Fr\'echet--Urysohn space is $\kappa$--Fr\'echet--Urysohn. The product $\RR^{\omega_1}$ of uncontably many copies of the real line is $\kappa$--Fr\'echet--Urysohn but not Fr\'echet--Urysohn (see \cite[Theorem 3.1]{Mr}, cf. Theorem \ref{main1} and  the remark following the proof of Theorem 5.9 in \cite{KM}).

\section{The pointwise topology}

\begin{prop}\label{non B implies k}
For any space $X$, if $C_p(X)$ does not have the property $(B)$, then $X$ has the property $(\kappa)$.
\end{prop}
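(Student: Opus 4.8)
The plan is to prove the contrapositive: if $X$ does not have property $(\kappa)$, then $C_p(X)$ has property $(B)$. So fix a pairwise disjoint sequence $(F_n)_{n\in\omega}$ of finite, non-empty subsets of $X$ such that, for every infinite $M\subseteq\omega$, the family $\{F_n:n\in M\}$ is not strongly point-finite (if the witnessing sequence has empty members, pass to the non-empty ones first). For each $k\in\omega$ define
$$A_k=\{f\in C_p(X):\ (\forall n\ge k)(\exists x\in F_n)\ |f(x)|\le n\},$$
and I claim the countable family $\{A_k:k\in\omega\}$ witnesses property $(B)$ of $C_p(X)$.

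Two routine verifications come first. Each $A_k$ is closed, being $\bigcap_{n\ge k}\bigcup_{x\in F_n}\{f:|f(x)|\le n\}$, an intersection of sets closed in $C_p(X)$. Each $A_k$ is also nowhere dense: given a basic open set $N(f,S,\varepsilon)$ with $S$ finite, I would pick $n\ge k$ with $F_n\cap S=\emptyset$ (possible, since the $F_n$ are pairwise disjoint and there are infinitely many of them), use the Tychonoff property to choose $\psi\in C(X,[0,1])$ with $\psi|_S\equiv 0$ and $\psi|_{F_n}\equiv 1$, and put $g=f+L\psi$ with $L$ so large that $|g(x)|>n$ for every $x\in F_n$; then $g\in N(f,S,\varepsilon)\setminus A_k$, so $N(f,S,\varepsilon)\not\subseteq A_k$, and $A_k$ has empty interior.

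The heart of the argument is to show that every compact $K\subseteq C_p(X)$ is contained in a single $A_k$. Here I would use only that $K$ is pointwise bounded, i.e. $\{f(x):f\in K\}$ is bounded for each fixed $x$ (evaluation at $x$ is continuous on $C_p(X)$). Consider $M=\{n\in\omega:\ (\exists f\in K)(\forall x\in F_n)\ |f(x)|>n\}$. If $M$ is finite, then any $k>\max M$ works, since for $n\ge k$ no $f\in K$ exceeds $n$ in modulus on all of $F_n$, which is exactly the defining condition of $A_k$. Suppose instead that $M$ is infinite. For each $n\in M$ choose $f_n\in K$ with $|f_n(x)|>n$ for all $x\in F_n$, and set $U_n=\{x\in X:|f_n(x)|>n\}$; this is open with $F_n\subseteq U_n$, so $\{U_n:n\in M\}$ is an open expansion of the subsequence $\{F_n:n\in M\}$. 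Since that subsequence is not strongly point-finite, $\{U_n:n\in M\}$ is not point-finite, hence some $x^*\in X$ belongs to $U_n$ for infinitely many $n\in M$; for those $n$ we have $f_n\in K$ and $|f_n(x^*)|>n$, contradicting the boundedness of $\{f(x^*):f\in K\}$.

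I expect the last step to be the delicate point, specifically the order of the quantifiers: one must first commit, for each $n\in M$, to a witnessing function $f_n$, thereby fixing one open expansion $\{U_n:n\in M\}$, and only then invoke the failure of strong point-finiteness to produce a single point $x^*$ that simultaneously sees infinitely many of the $f_n$ blowing up — whereupon pointwise boundedness of $K$ at $x^*$ closes the argument. Everything else is soft: closedness and nowhere density of the $A_k$ use nothing beyond the Tychonoff axiom and disjointness of the $F_n$, and the requirement that the $A_k$ cover $C_p(X)$ is automatic once absorption is proved, since singletons are compact.
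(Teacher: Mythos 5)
Your proof is correct, and it is essentially the paper's argument run in the contrapositive direction: your sets $A_k$ are (up to the harmless replacement of a fixed bound by the bound $n$) the complements of the open dense sets $U_k=\{f:\exists n\; f(S_n)\subseteq(k,+\infty)\}$ used in the paper, and the key mechanism is identical in both — pointwise boundedness of a compact subset of $C_p(X)$ forces the open expansion $U_n=\{x:|f_n(x)|>n\}$ to be point--finite, and conversely. You also correctly flag the quantifier order in the last step (fix the witnesses $f_n$ first, then invoke failure of strong point--finiteness for that one expansion), which is exactly where the argument lives.
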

\begin{proof}
Let $\mathcal{S}=\{S_n:n\in \omega\}$ be an arbitrary sequence of pairwise disjoint finite subsets of $X$. We will show that $\mathcal{S}$ has a strongly point--finite subsequence. For any positive integer $k$, define
$$U_k=\left\{f\in C_p(X): (\exists n\in \omega)\;\bigl(f(S_n)\subseteq (k,+\infty)\bigr)\right\}.$$
It is easy to verify that the set $U_k$ is open and dense in $C_p(X)$. Since $C_p(X)$ does not satisfy the property $(B)$, we can find a compact subset $K$ of $C_p(X)$ so that $K\cap U_k\neq \emptyset$ for every $k$. For each $k\geq 1$, take $f_k\in K\cap U_k$ and let $n_k\in\omega$ be a witness for $f_k\in U_k$, i.e., $f_k(S_{n_k})\subseteq (k,+\infty)$. For $k\geq 1$, the set $W_k=f_k^{-1}(k,+\infty)$ is open in $X$ and contains $S_{n_k}$. Let us check that the family $\{W_k:k=1,2,\ldots\}$ is point--finite (and thus $\{S_{n_k}:k=1,2,\ldots\}$ is strongly point--finite). To this end, fix an arbitrary point $x\in X$. Since $K\subseteq C_p(X)$ is compact, the set $\{f(x): f\in K\}$ is bounded in $\RR$. Find an integer $M>0$ so that
\begin{equation}\label{eq1}
\{f(x):f\in K\}\subseteq [-M,M].
\end{equation}
Now, $x\in W_k$ implies $f_k(x)>k$ and since $f_k\in K$, we infer from \eqref{eq1} that $$\{k:x\in W_k\}\subseteq \{1,\ldots, M\}.$$  
\end{proof}

The following useful lemma is known and easy to prove (see, e.g., \cite[6.4.2]{vM}).

\begin{lemma}\label{lemma_modification}
Let $X$ be a space. Suppose that
$H\subseteq X$ is closed and $F\subseteq X$ is finite. Let $\varepsilon>0$ and let $s:F\to \mathbb{R}$ satisfy $|s(x)|< \varepsilon$ for all $x\in H\cap F$. Then exists a continuous function $g:X\to \mathbb{R}$ such that $g(x)=s(x)$ for $x\in F$ and $|g(x)|<\varepsilon$ for $x\in H$.
\end{lemma}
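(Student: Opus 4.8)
The plan is to build $g$ in two steps: first produce \emph{some} continuous function interpolating $s$ on $F$, and then cut it down on $H$ in a way that does not disturb the prescribed values on $F\cap H$.

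\emph{Step 1 (interpolation on $F$).} Since $X$ is Tychonoff it is $T_1$, so singletons are closed and complete regularity lets us separate a point from any of the other (finitely many) points of $F$. Combining finitely many such functions, for each $x\in F$ we obtain a continuous $\psi_x\colon X\to[0,1]$ with $\psi_x(x)=1$ and $\psi_x(y)=0$ for all $y\in F\setminus\{x\}$. Then $h:=\sum_{x\in F}s(x)\psi_x$ is continuous and $h(y)=s(y)$ for every $y\in F$.

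\emph{Step 2 (truncation near $H$).} Since $F\cap H$ is finite and $|s(x)|<\varepsilon$ there, pick $\varepsilon'$ with $\max\{|s(x)|:x\in F\cap H\}<\varepsilon'<\varepsilon$ (if $F\cap H=\emptyset$, take $\varepsilon':=\varepsilon/2$). Let $t\colon\mathbb{R}\to\mathbb{R}$, $t(r)=\max(-\varepsilon',\min(r,\varepsilon'))$, be the truncation to $[-\varepsilon',\varepsilon']$; it is continuous, $|t|\le\varepsilon'$, and $t(r)=r$ whenever $|r|\le\varepsilon'$. The finite set $F\setminus H$ is disjoint from the closed set $H$, so complete regularity (again combined over finitely many points) yields a continuous $\varphi\colon X\to[0,1]$ with $\varphi\equiv 1$ on $F\setminus H$ and $\varphi\equiv 0$ on $H$ (if $F\setminus H=\emptyset$ take $\varphi\equiv 0$). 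Now set
$$g:=\varphi\cdot h+(1-\varphi)\cdot(t\circ h).$$

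This $g$ is continuous. For $x\in F\setminus H$ we have $\varphi(x)=1$, so $g(x)=h(x)=s(x)$; for $x\in F\cap H$ we have $\varphi(x)=0$, so $g(x)=t(h(x))=t(s(x))=s(x)$ because $|s(x)|<\varepsilon'$. Hence $g|_F=s$. Finally, for $x\in H$ we have $\varphi(x)=0$, whence $g(x)=t(h(x))$ and $|g(x)|\le\varepsilon'<\varepsilon$. The only subtlety is the simultaneous satisfaction of the two requirements on $F\cap H$, which is precisely why $\varepsilon'$ is chosen strictly between $\max_{F\cap H}|s|$ and $\varepsilon$; the degenerate cases $F\cap H=\emptyset$, $F\subseteq H$, and $H=\emptyset$ are all absorbed by the conventions above.
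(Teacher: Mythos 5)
Your proof is correct. The paper itself gives no argument for this lemma, only a citation to van Mill's book (6.4.2), and your two-step construction --- first interpolating $s$ on the finite set $F$ via Urysohn-type functions from complete regularity, then correcting on $H$ by blending with the truncation $t\circ h$ using a function $\varphi$ that is $1$ on $F\setminus H$ and $0$ on $H$ --- is exactly the kind of standard argument the citation points to. The one genuinely delicate point, making the two requirements compatible on $F\cap H$, is handled properly by choosing $\varepsilon'$ strictly between $\max_{x\in F\cap H}|s(x)|$ and $\varepsilon$, which is legitimate because $F\cap H$ is finite and the hypothesis gives a strict inequality at each of its points.
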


We are ready to prove Theorem \ref{main1}.

\begin{proof}[Proof of Theorem \ref{main1}]
By Proposition \ref{non B implies k} we have $(ii)\Rightarrow (i)$. To prove the converse, suppose that $X$ has the property $(\kappa)$ and let $\{A_n:n=1,2,\ldots\}$ be an increasing sequence of closed nowhere dense subsets of $C_p(X)$. Recursively, construct:
\begin{itemize}
\item an increasing sequence $S_0\subseteq S_1\subseteq\ldots S_n\subseteq\ldots$ of finite subsets of $X$,
\item a sequence $f_1, f_2,\ldots,f_n,\ldots$ of elements of $C_p(X)$ and
\item a sequence $\varepsilon_1,\varepsilon_2,\ldots,\varepsilon_n,\ldots$ of positive reals,
\end{itemize}
so that the following condition is satisfied, for every $n\geq 1$:
\begin{equation}\label{induction}
N(f_{n},S_{n},\varepsilon_{n})\subseteq N\bigl(\mathbf{0},S_{n-1},\tfrac{1}{n}\bigr)\setminus A_n.
\end{equation}

Put $S_0=\emptyset$ and fix $n\geq 1$.
Suppose that $S_{n-1}$ is already defined.
Since $A_n$ is nowhere dense in $C_p(X)$, there is $f_n\in N\bigl(\mathbf{0},S_{n-1},\tfrac{1}{n}\bigr)\setminus A_n$. Using the fact that $A_n$ is closed, we can find a finite set $S_n\supseteq S_{n-1}$ and $\varepsilon_n>0$ so that
$$N(f_{n},S_{n},\varepsilon_{n})\subseteq N\bigl(\mathbf{0},S_{n-1},\tfrac{1}{n}\bigr)\setminus A_n.$$
This finishes the recursive construction.

Let $T_1=S_1$ and $T_{n}=S_{n}\setminus S_{n-1}$ for $n>1$. Apply the property $(\kappa)$ to the sequence $\{T_n:n=1,2,\ldots\}$. It follows, there is an increasing sequence $n_1<\ldots <n_k<\ldots$ and a point--finite family $\{W_k:k=1,2,\ldots\}$ of open sets such that $W_k\supseteq T_{n_k}$, for every $k\geq 1$. We can clearly assume that $n_1>1$.

Given a positive integer $k$, apply Lemma \ref{lemma_modification} with $H=X\setminus W_k$, $F=S_{n_k}$, $\varepsilon=\tfrac{1}{n_k}$ and $s=f_{n_k}\upharpoonright S_{n_k}$. This is possible because $$H\cap F=(X\setminus W_k)\cap S_{n_k}\subseteq S_{n_k-1}$$
and by \eqref{induction},
$$|f_{n_k}(x)|<\varepsilon=\tfrac{1}{n_k} \mbox{, for } x\in S_{n_k-1}.$$
According to Lemma \ref{lemma_modification}, there is a function $g_k\in C_p(X)$ such that
\begin{align}
&g_k(x)=f_{n_k}(x) \mbox{ for } x\in S_{n_k} \quad\mbox{ and } \label{improvement1}\\
&g_k(x)<1/n_k\mbox{ for } x\in X\setminus W_k.\label{improvement2}
\end{align}
We claim that the sequence $(g_k)_k$ converges pointwise to $\mathbf{0}$.  Indeed, pick $x\in X$ and let $\delta>0$. We have $n_1<n_2<\ldots$, so we can find $N_1$ so that $1/n_k<\delta$ for $k>N_1$. Since the family $\{W_k:k=1,2,\ldots\}$ is point--finite, there is $N_2$ with $x\notin W_k$ for $k>N_2$. Put $N=\max\{N_1,N_2\}$ and note that if $k>N$, then
$$g_k(x)<1/n_k<\delta,$$
by \eqref{improvement2} and the choice of $N$. This means that $\lim_{k\to \infty}g_k(x)=0$ as promised.

It follows that the set $K=\{g_k:k=1,2,\ldots\}\cup\{\mathbf{0}\}$ is compact. By \eqref{induction} and \eqref{improvement1}, we have $g_{k}\notin A_{n_k}$ and since $n_1<n_2<\ldots$ and $A_1\subseteq A_2\subseteq\ldots$, we conclude that $K\not\subseteq A_n$ for all $n$.
\end{proof}

\begin{cor}\label{wniosek}
For any space $X$, the following conditions are equivalent:
\begin{enumerate}[(i)]
\item The space $X$ has the property $(\kappa)$.
\item The space $C_p(X)$ is $\kappa$--Fr\'echet--Urysohn.
\item The space $C_p(X)$ is an Ascoli space.
\item The space $C_p(X)$ does not satisfy the property (B).
\end{enumerate}
\end{cor}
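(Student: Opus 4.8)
The plan is to derive Corollary~\ref{wniosek} by combining Theorem~\ref{main1} with known facts about $\kappa$-Fréchet--Urysohn and Ascoli spaces, together with the elementary observation that a $\kappa$-Fréchet--Urysohn space cannot have the property (B). Since Theorem~\ref{main1} already gives $(i)\Leftrightarrow(iv)$, it suffices to insert (ii) and (iii) into the chain, and for this I would establish the implications $(ii)\Rightarrow(iii)$, $(iii)\Rightarrow(iv)$, and $(i)\Rightarrow(ii)$.

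First I would prove $(i)\Rightarrow(ii)$. Here I expect to cite the literature: the equivalence between property $(\kappa)$ of $X$ and the $\kappa$-Fréchet--Urysohn property of $C_p(X)$ is due to Sakai (this is essentially the reason the property was named $(\kappa)$ in \cite{S}); I would reference the relevant theorem there. Alternatively, one can give a direct argument mirroring the proof of Theorem~\ref{main1}: given an open set $\mathcal{U}\subseteq C_p(X)$ and $f\in\overline{\mathcal{U}}$, one builds, using $(\kappa)$ and Lemma~\ref{lemma_modification}, a sequence in $\mathcal{U}$ converging pointwise to $f$; the construction is the same modification-of-finitely-many-coordinates trick combined with extracting a point-finite open expansion of the relevant finite sets. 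I would state this and point to \cite{S}.

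Next, $(ii)\Rightarrow(iii)$ is a general topological fact: every $\kappa$-Fréchet--Urysohn space is Ascoli. I would simply invoke this (it is standard — see, e.g., the literature on Ascoli spaces), noting that $\kappa$-Fréchet--Urysohn $\Rightarrow$ sequential-type convergence properties $\Rightarrow$ Ascoli. Then $(iii)\Rightarrow(iv)$ is the key new link: an Ascoli space cannot have the property (B). The argument I would give is that if $X$ has property (B) witnessed by $\{A_n\}$, and $X$ is also Ascoli, one derives a contradiction by the same mechanism as in Proposition~\ref{non B implies k} — actually, more cleanly, $C_p(X)$ having property (B) combined with $C_p(X)$ being Ascoli forces a contradiction because in an Ascoli space every pointwise-bounded "compact-like" family is equicontinuous in a sense incompatible with being covered by closed nowhere dense sets; I would be careful here and instead argue: Ascoli $\Rightarrow$ $k_{\mathbb{R}}$-like behavior, so the functions $U_k$ from Proposition~\ref{non B implies k} would have to meet a common compact set, etc. The cleanest route is probably to show directly that property (B) of a space $Y$ implies $Y$ is not Ascoli, using that the $A_n$'s give a function on $Y$ that is continuous on every compact set but not continuous.

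The main obstacle I anticipate is getting the $(iii)\Rightarrow(iv)$ implication stated correctly and self-containedly, since it is the one genuinely new ingredient beyond Theorem~\ref{main1} and the cited characterizations; in particular I must make sure the argument that ``property (B) $\Rightarrow$ not Ascoli'' is valid for the relevant class of spaces (it should be, via building a discontinuous function that is continuous on compacta by exploiting the closed nowhere dense cover, but the details of choosing the witnessing function and the point where continuity fails need care). Everything else — $(i)\Leftrightarrow(iv)$ from Theorem~\ref{main1}, $(i)\Rightarrow(ii)$ from \cite{S}, $(ii)\Rightarrow(iii)$ as a general fact — is either already proved or a routine citation, so the corollary follows by closing the loop $(i)\Rightarrow(ii)\Rightarrow(iii)\Rightarrow(iv)\Rightarrow(i)$.
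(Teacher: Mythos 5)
Most of your plan coincides with the paper's: $(i)\Leftrightarrow(iv)$ is Theorem \ref{main1}, $(i)\Leftrightarrow(ii)$ is Sakai's theorem from \cite{S}, and $(ii)\Rightarrow(iii)$ is the general fact that every $\kappa$--Fr\'echet--Urysohn space is Ascoli. The problem is your one genuinely new link, $(iii)\Rightarrow(iv)$, which you propose to obtain from the general principle ``a space with the property (B) cannot be Ascoli.'' That principle is false. Take $Y=\varinjlim \mathbb{R}^n$ (the countable direct sum of lines, i.e.\ the inductive limit of $\mathbb{R}^1\subseteq\mathbb{R}^2\subseteq\cdots$ in the category of topological spaces): each $\mathbb{R}^n$ is closed and nowhere dense in $Y$ (a proper closed linear subspace of a topological vector space has empty interior), and every compact subset of $Y$ lies in some $\mathbb{R}^n$ (otherwise one picks $x_n\in K\setminus\mathbb{R}^n$ and obtains an infinite closed discrete subset of $K$). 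So $Y$ has the property (B). Yet $Y$ is a $k_\omega$-space, hence a $k$-space, hence a $k_{\mathbb{R}}$-space, hence Ascoli. This also shows why your fallback plan --- building from the $A_n$'s a function that is continuous on every compact set but discontinuous --- cannot work in general: on $Y$ every such function is continuous. Note, too, that your heuristic ``Ascoli $\Rightarrow$ $k_{\mathbb{R}}$-like behavior'' has the implication backwards: the chain is $k$-space $\Rightarrow$ $k_{\mathbb{R}}$-space $\Rightarrow$ Ascoli, and even $k_{\mathbb{R}}$ would not help, as the example shows.

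The paper avoids this trap: it never claims $(iii)\Rightarrow(iv)$ for arbitrary spaces. Instead it closes the loop with the theorem of Gabriyelyan, Greb\'{\i}k, K\k{a}kol and Zdomskyy \cite{GGKZ} that \emph{for function spaces} $C_p(X)$, the Ascoli property implies the $\kappa$--Fr\'echet--Urysohn property --- a substantive result that uses the linear/function-space structure of $C_p(X)$, not just its topology --- and then uses $(ii)\Rightarrow(iv)$ (which the paper does prove for arbitrary spaces in Theorem \ref{theorem kappa-FU implies nonB}). So your argument has a genuine gap at $(iii)\Rightarrow(iv)$: it must be replaced either by the citation of \cite{GGKZ} for $(iii)\Rightarrow(ii)$, or by some other argument that exploits the fact that the space in question is a $C_p(X)$ and not an arbitrary Ascoli space.
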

\begin{proof}
The equivalence $(i)\Leftrightarrow (ii)$ is a theorem of Sakai from \cite{S}. The implication $(iii)\Rightarrow (ii)$ is due to Gabriyelyan \textit{et. al.} \cite{GGKZ} and $(ii)\Rightarrow (iii)$ by \cite[Theorem 2.5]{G}. Finally, $(i)$ and $(iv)$ are equivalent by our Theorem \ref{main1}.
\end{proof}

For the remark below, we refer the reader to \cite[Theorem 5.9]{KM} or \cite[Corollary 2.7]{G}.

\begin{rem}
If $X$ is a compact space or, more generally, if $X$ is \v{C}ech complete, then all conditions in Corollary \ref{wniosek} are equivalent to the following condition:
\begin{enumerate}[(i)]
\setcounter{enumi}{4}
\item The space $X$ is scattered.
\end{enumerate}
\end{rem}

The following result answers a question asked by Tkachuk (see \cite[Question 4.6]{Tk1}).

\begin{cor}
If $X$ is first--countable pseudocompact non--scattered space, then $C_p(X)$ has the property (B).
\end{cor}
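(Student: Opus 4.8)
The plan is to deduce this from Theorem \ref{main1}: since that theorem says $C_p(X)$ has property (B) exactly when $X$ fails property $(\kappa)$, it suffices to show that a first--countable pseudocompact non--scattered space $X$ does \emph{not} have $(\kappa)$. I would base this on one easy observation and one construction. The observation is that failure of $(\kappa)$ passes from a closed subspace $F$ to the whole space $X$: if a pairwise disjoint sequence $(S_n)$ of finite subsets of $F$ has no infinite strongly point--finite subsequence in $F$, then it witnesses the failure of $(\kappa)$ for $X$ as well, because any point--finite family $\{U_k\}$ of open subsets of $X$ expanding an infinite subsequence $(S_{n_k})$ (with $S_{n_k}\subseteq U_k\subseteq X$) would give the point--finite open expansion $\{U_k\cap F\}$ of $(S_{n_k})$ inside $F$.

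Granting this, it is enough to find a closed subspace of $X$ that fails $(\kappa)$, and the natural target is a closed copy of the Cantor set $2^{\omega}$: being compact and not scattered, $2^{\omega}$ is Čech complete and not scattered, hence it fails $(\kappa)$ by the Remark following Corollary \ref{wniosek} (equivalently, by \cite[Theorem 5.9]{KM} together with Theorem \ref{main1}). So the real task is to build such a copy of $2^{\omega}$ in $X$. I would do this with a dyadic scheme. Since $X$ is non--scattered, fix a nonempty closed subspace $F\subseteq X$ with no isolated points. Recursively construct nonempty open sets $\{W_s : s\in 2^{<\omega}\}$ in $X$ and points $x_s\in W_s\cap F$ so that $\overline{W_{s^\frown 0}}\cup\overline{W_{s^\frown 1}}\subseteq W_s$, $\overline{W_{s^\frown 0}}\cap\overline{W_{s^\frown 1}}=\emptyset$, and the $x_s$ are pairwise distinct. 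Here crowdedness of $F$ is precisely what keeps the recursion alive (each $W_s\cap F$ is infinite), and complete regularity provides the nesting and the disjointness of the closures. Pseudocompactness enters now: being Tychonoff and pseudocompact, $X$ is feebly compact, so every decreasing sequence of nonempty open sets has a nonempty intersection of closures; applying this along a branch $z\in 2^{\omega}$ to $(W_{z|n})_n$ gives a point $p_z\in\bigcap_n\overline{W_{z|n}}$. Because the level--$n$ closures $\{\overline{W_s}:|s|=n\}$ are pairwise disjoint, $z\mapsto p_z$ is injective, the set $C:=\{p_z:z\in 2^{\omega}\}$ is closed in $X$, and $C\cap W_s=\{p_z: s\subseteq z\}$, so $\{C\cap W_s\}$ imprints the tree structure of $2^{\omega}$ on $C$.

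The point where first countability is indispensable — and what I expect to be the main obstacle — is promoting "$C$ carries the tree structure of $2^{\omega}$'' to "$C$ is homeomorphic to $2^{\omega}$'', i.e.\ ensuring that each fibre $\bigcap_n\overline{W_{z|n}}$ is a single point and that $\{W_{z|n}\}_n$ is a neighbourhood base at $p_z$, so that the continuous bijection $z\mapsto p_z$ is a homeomorphism onto the (then necessarily compact) $C$. Without first countability the fibres can be large and $z\mapsto p_z$ need not be a homeomorphism, so the construction must be interlaced with the fixed countable neighbourhood bases of the points $x_s$: at each stage one shrinks the open sets being introduced so as to force the branches to converge. This is a standard but somewhat delicate fusion/bookkeeping argument, and it is the technical core of the proof. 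One way to streamline it is to first produce, from pseudocompactness and non--scatteredness, a continuous surjection $h\colon X\to[0,1]$ (restrict a surjection $\beta X\to[0,1]$; its restriction to $X$ has a compact image that is dense in $[0,1]$, hence equal to it), and to arrange additionally that $h(W_s)$ is dense in the $s$-th interval of a Cantor scheme in $[0,1]$ with diameters tending to $0$; then $h$ separates distinct branches automatically (the images $h(p_z)$ are distinct Cantor points), and first countability is still exactly what is needed to conclude that $C$ is compact. Once $C\cong 2^{\omega}$ is in hand as a closed subspace of $X$, the two reductions above finish the proof.
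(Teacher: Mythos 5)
Your reduction is sound as far as it goes: by Theorem \ref{main1} it suffices to show that $X$ fails $(\kappa)$; failure of $(\kappa)$ does pass from a subspace to the ambient space (restricting a point--finite open expansion to the subspace, exactly as you say); and a compact non--scattered space fails $(\kappa)$ by the Remark following Corollary \ref{wniosek}. The paper, however, finishes at this point in one line by invoking \cite[Theorem 3.19]{Tk2}, which states directly that a first--countable pseudocompact space with property $(\kappa)$ is scattered. You instead try to reprove that theorem by manufacturing a closed copy of $2^{\omega}$ inside $X$, and this is where the argument has a genuine gap rather than a bookkeeping issue.

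Concretely: feeble compactness only gives $\bigcap_n\overline{W_{z|n}}\neq\emptyset$ along each branch; it gives no control over the size of these fibres, and the full limit set $P=\bigcap_n\bigcup_{|s|=n}\overline{W_s}$, while closed, has no reason to be compact, since closed subspaces of pseudocompact spaces need not even be pseudocompact. Your set $C$, obtained by selecting one point per branch, need not be closed at all --- the disjointness of the level--$n$ closures shows that $P$ is closed, not $C$. More importantly, first countability is only available at points you can name during the recursion, and you name only countably many points $x_s$, whereas the continuum many limit points $p_z$ are in general not among them; so ``interlacing with the neighbourhood bases of the $x_s$'' cannot force the fibres to be singletons or force $\{W_{z|n}\}_n$ to be a neighbourhood base at $p_z$. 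The detour through a surjection $h\colon X\to[0,1]$ does separate the images of distinct fibres, but the assertion that ``first countability is still exactly what is needed to conclude that $C$ is compact'' is precisely the unproved step, and nothing in the sketch supplies it. Indeed, it is not at all clear that a first--countable pseudocompact non--scattered space must contain a closed copy of $2^{\omega}$, or even an infinite compact non--scattered subspace; this intermediate claim is substantially stronger than the corollary itself and would need its own proof (or reference). As it stands, the proposal proves the easy reductions and leaves the actual content --- the step the paper outsources to Tkachuk --- unestablished.
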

\begin{proof}
 If $C_p(X)$ does not have the property (B), then according to Theorem \ref{main1}, the space $X$ has the property $(\kappa)$. It follows from
 \cite[Theorem 3.19]{Tk2} that $X$ must be then scattered, contradicting our assumption.
\end{proof}

It is shown in \cite[Theorem 5.3]{KM} that a nonempty Fr\'echet--Urysohn space cannot have the property (B). Modifying the reasoning presented in \cite{KM}, we can strengthen this result as follows.

\begin{thrm}\label{theorem kappa-FU implies nonB}
 If a nonempty space $X$ is $\kappa$--Fr\'echet--Urysohn, then $X$ does not have the property (B).
\end{thrm}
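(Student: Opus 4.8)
The plan is to argue by contradiction, adapting the proof of \cite[Theorem~5.3]{KM} to the setting of open sets, which is what the $\kappa$--Fr\'echet--Urysohn property controls. Assume $X$ is nonempty and $\kappa$--Fr\'echet--Urysohn and, towards a contradiction, that $X$ has the property (B). If $X$ had an isolated point $p$, then $\{p\}$ would be a compact set contained in no nowhere dense subset of $X$ (being open, it cannot lie in a set with empty interior of the closure), so that property (B) would fail; hence we may assume $X$ is crowded. Fix a family $\{B_n:n\in\omega\}$ of closed nowhere dense subsets of $X$ absorbing all compact sets, and put $A_n=B_1\cup\cdots\cup B_n$; then $A_1\subseteq A_2\subseteq\cdots$ is an increasing sequence of closed nowhere dense sets still absorbing all compact sets, and, singletons being compact, $\bigcup_nA_n=X$. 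For $y\in X$ write $\ell(y)=\min\{n:y\in A_n\}\in\{1,2,\dots\}$, and set $G_n=X\setminus A_n$; the $G_n$ are dense open, decrease, and have $\bigcap_nG_n=\emptyset$. It suffices to produce a compact set $K\subseteq X$ with $\sup\{\ell(y):y\in K\}=\infty$, since such a $K$ is contained in no $A_n$.

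The $\kappa$--Fr\'echet--Urysohn hypothesis is used through a single observation: if $W\subseteq X$ is open and $x\in\overline W$ satisfies $x\notin\overline{W\cap A_n}$ for every $n$, then there is a compact $K$ as required. Indeed, there is a sequence $(w_k)_k$ in $W$ with $w_k\to x$; if the $\ell(w_k)$ were bounded by some $n_0$ then $\{w_k:k\in\omega\}\subseteq W\cap A_{n_0}$, whence $x\in\overline{W\cap A_{n_0}}$, a contradiction. So $\{\ell(w_k):k\in\omega\}$ is unbounded, and $K:=\{w_k:k\in\omega\}\cup\{x\}$ works.

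It remains to find such a pair $(W,x)$. I would search for $W$ of the form $W=\bigcup_{n\ge 1}V_n$, with each $V_n$ a nonempty open subset of $G_n$. For such $W$ one has $V_n\cap A_m=\emptyset$ whenever $m\le n$, hence $W\cap A_m\subseteq V_1\cup\cdots\cup V_{m-1}$ and $\overline{W\cap A_m}\subseteq\overline{V_1}\cup\cdots\cup\overline{V_{m-1}}$; therefore it is enough to choose the $V_n$ and the point $x$ so that $x\notin\overline{V_n}$ for all $n$ while $x\in\overline{\bigcup_nV_n}$ --- equivalently, so that $\overline{\bigcup_nV_n}\ne\bigcup_n\overline{V_n}$, with $x$ taken in the difference. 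To obtain such a family I would run a recursion fed by the $\kappa$--Fr\'echet--Urysohn property: starting from a point of $A_1$, repeatedly apply it to the dense open sets $G_n$ to produce sequences running through points of arbitrarily large level and clustering at the point built so far, and use regularity of $X$ to replace such points by nonempty open sets $V_n\subseteq G_n$ whose closures miss a fixed neighbourhood of the limit point.

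The main obstacle is exactly this last construction. If $X$ is first countable it is immediate: for any $x$, fix a decreasing neighbourhood base $(O_n)_n$ at $x$, pick $x_n\in O_n\cap G_n$ (possible since $G_n$ is dense), and note that $x_n\to x$ while $\ell(x_n)\to\infty$, so $\{x_n:n\in\omega\}\cup\{x\}$ is the desired $K$. Without first countability at $x$ one cannot diagonalize a neighbourhood base against the $G_n$, and the delicate point is to guarantee that $\bigcup_nV_n$ really accumulates at a \emph{single} point $x$ lying outside every $\overline{V_n}$; this is precisely where the argument of \cite[Theorem~5.3]{KM} must be modified, the role of convergent sequences running into a set being taken over by the open set $W$ together with the condition $x\notin\overline{W\cap A_n}$.
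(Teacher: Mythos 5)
Your setup and your ``single observation'' are both correct: the reduction to finding an open set $W$ and a point $x\in\overline{W}$ with $x\notin\overline{W\cap A_n}$ for every $n$, followed by one application of the $\kappa$--Fr\'echet--Urysohn property to extract a convergent sequence of unbounded level, is essentially the second half of the paper's argument. But the proposal is not a proof: you explicitly leave the construction of the pair $(W,x)$ as an unresolved ``main obstacle,'' and that construction is the heart of the matter. Your closing sketch (recursing through the dense open sets $G_n$, diagonalizing, invoking regularity) does not converge to an argument, and you correctly sense that without some device to anchor everything at a single point it cannot work.

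The missing idea is that the $\kappa$--Fr\'echet--Urysohn property must be applied \emph{twice}, and it is the first application that produces the anchor. Since $X$ has no isolated points, any $x\in X$ lies in $\overline{X\setminus\{x\}}$, and $X\setminus\{x\}$ is open; so $\kappa$--Fr\'echet--Urysohn already yields an injective sequence $(x_n)$ in $X\setminus\{x\}$ converging to $x$. Now separate: choose disjoint open sets $V_n\ni x$ and $W_n\ni x_n$, and put $U_n=W_n\setminus A_n$. Each $U_n$ is a nonempty open subset of $G_n$ that is dense in $W_n$ (since $A_n$ is closed and nowhere dense); moreover $x\notin\overline{U_n}$, because $U_n\subseteq W_n$ misses the neighbourhood $V_n$ of $x$, while $x\in\overline{\bigcup_n U_n}$, because every neighbourhood of $x$ contains some $x_n\in W_n\subseteq\overline{U_n}$ and hence meets $U_n$. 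This is exactly the pair $W=\bigcup_n U_n$, with the $U_n$ playing the role of your $V_n\subseteq G_n$, that your second paragraph needs; no diagonalization against a neighbourhood base and no appeal to regularity is required. With this step supplied, your argument closes and coincides with the paper's proof.
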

\begin{proof}
Striving for a contradiction, suppose that there exists a nonempty $\kappa$--Fr\'{e}chet--Urysohn space $X$ satisfying property (B). Fix closed nowhere dense sets $A_n \subseteq  X$ witnessing it. Without loss of generality, we may assume that $A_n \subseteq  A_{n + 1}$ for all $n$. Observe that the space satisfying (B) cannot have isolated points.

Let $x \in X$ be any point. Since $x\in \overline{X\setminus\{x\}}$ we can apply the $\kappa$--Fr\'echet--Urysohn property of $X$ to find
an injective sequence $(x_n)_{n \in \NN}$ of elements of $X \setminus \{x\}$ converging to $x$. Pick families $\{V_n \colon n \in \NN\}$ and $\{W_n \colon n \in \NN\}$ of open subsets of $X$ satisfying the following conditions for all $n \in \NN$:
\begin{equation}\label{conditions}
 x \in V_n,\;x_n \in W_n \mbox{ and }
V_n \cap W_n = \emptyset.
\end{equation}
For each $n \in \NN$, put $U_n = W_n \setminus A_n.$
Clearly,
$U_n$ is a nonempty open subset of $X$ and $U_n$ is dense in $W_n$. Let
$$U=\bigcup_{n\in \NN}U_n.$$
Since $U_n\subseteq W_n$, we infer from \eqref{conditions} that $x\notin U$.

Let us check that $x\in \overline{U}$. To this end, fix an open neighborhood $P$ of $x$. Since the sequence $(x_n)_{n\in \NN}$ converges to $x$, we can find $n$ so that $x_n\in P$. According to \eqref{conditions} and the fact that $U_n$ is dense in $W_n$, we have $x_n\in P\cap W_n\subseteq P\cap \overline{U_n}$. This means that the latter set is nonempty whence $P\cap U_n\neq \emptyset$, because $P$ is open. In particular $P\cap U\neq\emptyset$ and thus $x\in \overline{U}$. Since $x\in \overline{U}\setminus U$ we can apply the $\kappa$--Fr\'{e}chet--Urysohn property to find an injective sequence $\{y_n: n\in \NN\}\subseteq U$ converging to $x$.

Define
$$K = \{y_n \colon n \in \NN\} \cup \{x\}.$$
The set $K$ is compact and by \eqref{conditions}, for each $n\in \NN$, the intersection $K\cap U_n$ is finite. Hence, $K$ meets infinitely many $U_n$'s. Since the family $\{A_n:n\in \NN\}$ is increasing, it follows that $K\not\subseteq A_n$ for all $n$. This yields a contradiction.
\end{proof}

\begin{rem}
 The above theorem says that the implication $(ii)\Rightarrow (iv)$ in Corollary \ref{wniosek} holds for an arbitrary space.
\end{rem}

\section{The compact--open topology}

In this section we will prove a result analogous to Theorem \ref{main1} for spaces $C_k(X)$ of continuous functions on $X$ equipped with the compact--open topology. We need some notation first. We say that a nonempty family $\KK$ of nonempty subsets of a topological space $X$ is:
\begin{itemize}
\item \emph{Moving off}, if for every compact $L \subseteq  X$, there exists $K \in \KK$ disjoint with $L$;
\item \emph{Compact--finite}, if for every compact $L \subseteq  X$ the set $\{K \in \KK \colon K \cap L \neq \emptyset\}$ is finite;
\item \emph{Strongly compact--finite}, if there exists a compact--finite family $\{U_K \colon K \in \KK\}$ consisting of open subsets of $X$ satisfying $K \subseteq  U_K$ for all $K \in \KK$.
\end{itemize}
In the sequel moving off families will consist only of compact subsets of $X$, however more general approach has been studied in the literature (cf. \cite{Sa2}). The proof of the proposition below is almost the same as the proof of Proposition \ref{non B implies k}.

\begin{prop}\label{proposition for compact-open}
For any space $X$, if $C_k(X)$ does not satisfy the property (B), then every moving off family consisting of nonempty compact subsets of $X$ has a countable infinite strongly compact--finite subfamily.
\end{prop}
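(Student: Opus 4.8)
The plan is to transcribe the argument for Proposition~\ref{non B implies k}, systematically replacing ``finite subset'' by ``compact subset'' and ``point--finite'' by ``compact--finite''. Fix a moving off family $\KK$ of nonempty compact subsets of $X$. For each integer $k\geq 1$ I would put
$$U_k=\{f\in C_k(X): (\exists K\in\KK)\; f(K)\subseteq(k,+\infty)\}.$$
First I would verify that each $U_k$ is open and dense in $C_k(X)$. Openness is easy: if $K\in\KK$ witnesses $f\in U_k$, then $f(K)$ is a compact subset of $(k,+\infty)$, so $m:=\min f(K)>k$ and $M(f,K,m-k)\subseteq U_k$. Density is exactly where the moving off property enters: given a basic open set $M(f,L,\varepsilon)$ with $L$ compact, pick $K\in\KK$ with $K\cap L=\emptyset$; since $L$ and $K$ are disjoint compact subsets of a Tychonoff space, Tietze's theorem applied to the compact normal space $L\cup K$ (or a direct separation argument) yields $g\in C(X)$ with $g\upharpoonright L=f\upharpoonright L$ and $g\upharpoonright K\equiv k+1$, so $g\in M(f,L,\varepsilon)\cap U_k$.

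Next, each $C_k(X)\setminus U_k$ is closed and nowhere dense, so exactly as in the proof of Proposition~\ref{non B implies k} the failure of property (B) yields a compact set $\Phi\subseteq C_k(X)$ with $\Phi\cap U_k\neq\emptyset$ for every $k$. For each $k\geq 1$ I would choose $f_k\in\Phi\cap U_k$ with a witness $K_k\in\KK$, $f_k(K_k)\subseteq(k,+\infty)$, and set $W_k=f_k^{-1}\big((k,+\infty)\big)$, an open subset of $X$ containing $K_k$. The heart of the matter is to show that $\{W_k:k\geq 1\}$ is compact--finite, which then makes $\{K_k:k\geq 1\}$ strongly compact--finite. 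To this end, given a compact $L\subseteq X$, the restriction map $C_k(X)\ni f\mapsto f\upharpoonright L\in(C(L),\|\cdot\|_\infty)$ is continuous, so its image on $\Phi$ is compact and hence bounded by some $M$; then $W_k\cap L\neq\emptyset$ forces $k<M$, so only finitely many $W_k$ meet $L$. The same boundedness estimate applied with a fixed $K\in\KK$ in place of $L$ shows that each member of $\KK$ can serve as $K_k$ for only finitely many $k$; therefore $\{K_k:k\geq 1\}$ has infinitely many distinct terms, and restricting to those we obtain the required countable infinite strongly compact--finite subfamily of $\KK$.

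The one point that is genuinely new compared with Proposition~\ref{non B implies k}, and the step I would be most careful about, is passing from boundedness of $\{f(x):f\in\Phi\}$ at a single point to boundedness of $\{f\upharpoonright L:f\in\Phi\}$ on a whole compact set $L$. For $C_p(X)$ this was not needed, and for $C_k(X)$ one cannot in general appeal to joint continuity of evaluation on $C_k(X)\times X$; instead one uses that restriction to a compact set is a continuous map of $C_k(X)$ into the Banach space $C(L)$. Apart from this, the argument is a routine adaptation, with the moving off property of $\KK$ playing the role that pairwise disjointness of the finite sets (and the resulting density of the analogous sets $U_k$) played in the pointwise case.
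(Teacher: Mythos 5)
Your proposal is correct and follows essentially the same route as the paper's proof; the only cosmetic difference is that you obtain the uniform bound of $\Phi$ on a compact set $L$ via continuity of the restriction map into $(C(L),\|\cdot\|_\infty)$, whereas the paper covers the compact set $A\subseteq C_k(X)$ by the increasing open sets $D_n=\{f\in C_k(X):f(L)\subseteq(-n,n)\}$ and extracts a finite subcover. You also explicitly check that infinitely many distinct members of $\KK$ occur among the witnesses $K_k$ (so that the resulting subfamily is genuinely infinite), a detail the paper leaves implicit.
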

\begin{proof}
Fix a moving off family $\KK$ consisting of compact subsets of a space $X$. For every $n \in \NN$, define the set
\[
U_n =\{f \in C_k(X) \colon \exists K \in \KK \;\; f(K) \subseteq  (n, + \infty) \}.
\]
It is clear that each set $U_n$ is open in $C_k(X)$. Since the family $\KK$ is moving off, it easily follows the each $U_n$ is also dense in $C_k(X)$. By the failure of the property (B), we can find a compact subset $A$ of $C_k(X)$ so that $A \cap U_n \neq \emptyset$, for all $n \in \NN$.
For each $n \in \NN$ take $f_n \in A \cap U_n$. Since $f_n\in U_n$, there is $K_n\in \KK$ such that $f_n(K_n)\subseteq (n, + \infty)$. Define $W_n = f_n^{-1}\big( (n, + \infty) \big)$ and notice that $K_n \subseteq  W_n$ for each $n \in \NN$. It suffices to check that $\{W_n \colon n \in \NN\}$ forms a compact--finite family. To this end, fix a compact subset $L$ of $X$. Let us first note the following:
\begin{equation}\label{6}
 \mbox{There is $M>0$ such that $f(L)\subseteq [-M,M]$ for all $f\in A$.}
\end{equation}
Indeed, if $D_n = \{f \in C_k(X) \colon f(L) \subseteq  (-n, n)\}$, then the family $\{D_n:n=1,2\ldots\}$ is an open cover of $C_k(X)$. Since $A\subseteq C_k(X)$ is compact and $D_n\subseteq D_{n+1}$, we can find $M\in \NN$ so that $A\subseteq D_{M}$. This gives \eqref{6}.

Now, if $L\cap W_n\neq\emptyset$ then $f_n(x)>n$ for some $x\in L$. Hence, if $n>M$, then $f_n\notin A$, by \eqref{6}. Since $f_n\in A$ for all $n$, we get
$\{n\in \NN:L\cap W_n\neq\emptyset\}\subseteq \{1,\ldots ,M\}.$
\end{proof}

\begin{thrm}
For any space $X$ the following conditions are equivalent:
\begin{enumerate}[(i)]
\item The space $C_k(X)$ does not satisfy the property $(B)$,
\item Every moving off family consisting of nonempty compact subsets of $X$ has a countable infinite strongly compact--finite subfamily,
\item The space $C_k(X)$ is $\kappa$--Fr\'{e}chet--Urysohn.
\end{enumerate}
\end{thrm}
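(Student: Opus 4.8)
The plan is to close the cycle of implications $(i)\Rightarrow(ii)\Rightarrow(iii)\Rightarrow(i)$. The implication $(i)\Rightarrow(ii)$ is exactly Proposition \ref{proposition for compact-open}. The implication $(iii)\Rightarrow(i)$ is immediate: $C_k(X)$ is nonempty (it contains $\mathbf{0}$) and is $\kappa$--Fr\'echet--Urysohn by hypothesis, so Theorem \ref{theorem kappa-FU implies nonB} applies. Everything therefore reduces to $(ii)\Rightarrow(iii)$, which is the analogue, for the compact--open topology, of the nontrivial direction of Theorem \ref{main1} and which I would prove along the same lines.

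So assume $X$ has property $(ii)$; we show that $C_k(X)$ is $\kappa$--Fr\'echet--Urysohn. Fix an open set $U\subseteq C_k(X)$ and a point $f_0\in\overline U$. Since $C_k(X)$ is a topological group we may translate and assume $f_0=\mathbf{0}$, and if $X$ is compact then $C_k(X)=(C(X),\|\cdot\|_\infty)$ is metrizable, hence Fr\'echet--Urysohn, so we may also assume that $X$ is not compact. Imitating the recursion in the proof of Theorem \ref{main1}, we build an increasing sequence of compact sets $\emptyset=L_0\subseteq L_1\subseteq\cdots$ in $X$, functions $g_n\in U\cap M\bigl(\mathbf{0},L_{n-1},\tfrac1n\bigr)$, and reals $\varepsilon_n>0$ with $M(g_n,L_n,\varepsilon_n)\subseteq U$; here we use that $\mathbf{0}\in\overline U$ and that $U$ is open, and we enlarge each $L_n$ so that $L_{n-1}\subseteq L_n$. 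Since the set $\{g\in C_k(X):\|g\|_\infty\le\tfrac1n\}$ has empty interior in $C_k(X)$ (because $X$ is not compact), we may also require $\|g_n\|_\infty>\tfrac1n$ and enlarge $L_n$ to contain a point $x_n$ with $|g_n(x_n)|>\tfrac1n$; then $K_n=\{x\in L_n:|g_n(x)|\ge\tfrac1n\}$ is a nonempty compact set, and it is disjoint from $L_{n-1}$ because $|g_n|<\tfrac1n$ on $L_{n-1}$.

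Next we apply property $(ii)$ to the family $\{K_n:n\in\NN\}$ of nonempty compact subsets of $X$, obtaining indices $n_1<n_2<\cdots$ and a compact--finite family $\{W_k:k\in\NN\}$ of open sets with $K_{n_k}\subseteq W_k$ for every $k$. Since $(X\setminus W_k)\cap L_{n_k}\subseteq L_{n_k}\setminus K_{n_k}=\{x\in L_{n_k}:|g_{n_k}(x)|<\tfrac1{n_k}\}$, we may invoke the modification lemma --- Lemma \ref{lemma_modification}, which remains valid, with essentially the same proof, when the finite set $F$ is replaced by a compact one, as a compact subspace of a Tychonoff space is $C$--embedded --- with $H=X\setminus W_k$, $F=L_{n_k}$, $s=g_{n_k}\upharpoonright L_{n_k}$ and $\varepsilon=\tfrac1{n_k}$. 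This produces $h_k\in C(X)$ with $h_k\upharpoonright L_{n_k}=g_{n_k}\upharpoonright L_{n_k}$ --- hence $h_k\in M(g_{n_k},L_{n_k},\varepsilon_{n_k})\subseteq U$ --- and $|h_k(x)|<\tfrac1{n_k}$ for every $x\in X\setminus W_k$. Finally, $(h_k)_k$ converges to $\mathbf{0}$ in $C_k(X)$: given a compact $L\subseteq X$, compact--finiteness of $\{W_k\}$ gives $L\cap W_k=\emptyset$ for all large $k$, so $\sup_{x\in L}|h_k(x)|\le\tfrac1{n_k}\to0$. Thus $(h_k)_k$ is a sequence in $U$ converging to $\mathbf{0}$, which shows that $C_k(X)$ is $\kappa$--Fr\'echet--Urysohn.

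I expect the step applying property $(ii)$ to be the main obstacle, because it presupposes that $\{K_n:n\in\NN\}$ --- or at least an infinite subfamily --- is a \emph{moving off} family, which has no counterpart in the proof of Theorem \ref{main1}, where property $(\kappa)$ can be fed an arbitrary pairwise disjoint sequence of finite sets. Securing this requires running the recursion more carefully so that the compact increments $K_n$ eventually escape every compact subset of $X$; the natural approach is a closing--off argument with a case split --- should some compact $L_\ast\subseteq X$ meet every $K_n$, one absorbs $L_\ast$ into all the $L_n$'s and restarts --- and one must check that, after handling the degenerate cases separately, this process terminates with a legitimate moving off family. (Alternatively, the equivalence $(ii)\Leftrightarrow(iii)$ may be available in the literature on the Ascoli and $\kappa$--Fr\'echet--Urysohn properties of the spaces $C_k(X)$.)
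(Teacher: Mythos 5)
Your handling of $(i)\Rightarrow(ii)$ (Proposition \ref{proposition for compact-open}) and of $(iii)\Rightarrow(i)$ (Theorem \ref{theorem kappa-FU implies nonB} applied to the nonempty space $C_k(X)$) is exactly what the paper does. Where you part ways is the remaining implication: the paper does not prove it directly but imports it from Sakai's theorem \cite[Theorem 2.3]{Sa2}, which characterizes the $\kappa$--Fr\'echet--Urysohn property of $C_k(X)$ precisely by condition (ii); you instead attempt a direct proof of $(ii)\Rightarrow(iii)$ modelled on Theorem \ref{main1}, and that attempt has a genuine gap --- the one you yourself flag in your last paragraph.

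The gap is not a minor technicality. In the proof of Theorem \ref{main1}, property $(\kappa)$ accepts an \emph{arbitrary} disjoint sequence of finite sets, so the increments $T_n=S_n\setminus S_{n-1}$ qualify automatically; condition (ii) applies only to \emph{moving off} families, and your recursion gives no reason why $\{K_n:n\in\NN\}$ should be one. You ensure $K_n\cap L_{n-1}=\emptyset$, but an arbitrary compact $L$ that is never absorbed into the $L_n$'s (and only countably many compact sets ever are) may meet every $K_n$: nothing in the recursion prevents each chosen $g_n$ from being large somewhere on one fixed compact set, even when other admissible choices would have avoided it. The ``closing--off argument with a case split'' is only named, not carried out, and it is not clear it terminates, since absorbing one offending compact set and restarting can be defeated by another. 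The known proof of this implication defines the candidate family \emph{globally} --- roughly, all nonempty sets of the form $\{x\in K':|g(x)|\ge\delta\}$ ranging over all $g\in U$ and compact $K'$ with $M(g,K',\varepsilon)\subseteq U$ --- so that the moving off property follows directly from $\mathbf{0}\in\overline{U}$, and then treats separately the degenerate case in which such a set is empty. You should either carry out that construction or, as the paper does, cite Sakai's theorem for $(ii)\Leftrightarrow(iii)$. The remainder of your argument (the extension lemma for compact $F$ via $C$--embedding, the inclusion $h_k\in M(g_{n_k},L_{n_k},\varepsilon_{n_k})\subseteq U$, and the convergence of $(h_k)_k$ to $\mathbf{0}$ in $C_k(X)$ from compact--finiteness of $\{W_k\}$) is correct.
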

\begin{proof}
According to Proposition \ref{proposition for compact-open}, we have $(i)\Rightarrow (ii)$. The implication $(ii)\Rightarrow (i)$ is due to Sakai \cite[Theorem 2.3]{Sa2}. Finally, $(iii)\Rightarrow (i)$, by Theorem \ref{theorem kappa-FU implies nonB}.
\end{proof}

\section{Property $(\kappa)$ and the size of bounded subsets}

Recall that a subset $A$ of a topological space $X$ is \textit{bounded} if, for every $f\in C_p(X)$, the set $f(A)$ is bounded in $\mathbb{R}$.

Tkachuk asked the following question (see \cite[Question 4.2]{Tk2}):\smallskip

\emph{Suppose that all bounded subsets of a topological space $X$ are finite. Must $X$ have the property $(\kappa)$?}\smallskip

In this section we will provide a negative answer to this question. For verification of the properties of our counterexample we will use the next two results. The first of them was proved in \cite[Proposition 5.4]{KM} for compact spaces $X$, but exactly the same argument works for Baire spaces (see \cite[Proposition 3.6]{Tk1}).

\begin{prop}\label{B_przelicz}
Let $X$ be a Baire space with a countable family $\mathcal{S}$ of infinite subsets, such that any nonempty open subset of $X$ contains a member of $\mathcal{S}$.
Then $C_p(X)$ has the property (B).
\end{prop}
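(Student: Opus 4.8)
The plan is to write down explicitly a countable family of closed nowhere dense subsets of $C_p(X)$ that absorbs all compact sets, assembled directly from the given family $\mathcal{S}$.

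First I would fix an enumeration $\mathcal{S}=\{S_n:n\in\omega\}$ and, for each $n\in\omega$ and each positive integer $m$, define
$$A_{n,m}=\{f\in C_p(X): |f(x)|\le m \text{ for every } x\in S_n\}.$$
Each $A_{n,m}$ is closed, being the intersection over $x\in S_n$ of the sets $\{f:|f(x)|\le m\}$, which are preimages of $[-m,m]$ under the continuous evaluation map $f\mapsto f(x)$.

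Next I would check that each $A_{n,m}$ is nowhere dense. Given a basic open set $N(f,F,\varepsilon)$ with $F$ finite, I use that $S_n$ is infinite to choose a point $p\in S_n\setminus F$; since $X$ is Tychonoff there is a continuous $\varphi\colon X\to[0,1]$ with $\varphi\restriction F\equiv 0$ and $\varphi(p)=1$, and then $g=f+(m+1-f(p))\varphi$ satisfies $g\restriction F=f\restriction F$, so $g\in N(f,F,\varepsilon)$, while $|g(p)|=m+1>m$, so $g\notin A_{n,m}$. (This could also be quoted directly from Lemma \ref{lemma_modification}.) Hence the complement of $A_{n,m}$ is dense, and being closed, $A_{n,m}$ is nowhere dense.

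The heart of the argument is to show that $\{A_{n,m}:n\in\omega,\ m\ge 1\}$ absorbs compact sets. Let $K\subseteq C_p(X)$ be compact. For every $x\in X$ the evaluation $f\mapsto f(x)$ carries $K$ onto a compact, hence bounded, subset of $\mathbb{R}$, so the closed sets
$$C_m=\{x\in X: |f(x)|\le m \text{ for all } f\in K\}=\bigcap_{f\in K}\{x:|f(x)|\le m\}$$
cover $X$. Since $X$ is Baire, some $C_m$ has nonempty interior $U$; by hypothesis $U$ contains some member $S_n$ of $\mathcal{S}$, and then $|f(x)|\le m$ for all $f\in K$ and all $x\in S_n$, i.e.\ $K\subseteq A_{n,m}$. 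The only genuinely non-routine ingredient is this Baire-category step (writing $X$ as a countable union of the closed sets $C_m$ and extracting one with nonempty interior, then invoking the defining property of $\mathcal{S}$); the remaining verifications are straightforward manipulations with the pointwise topology and Tychonoff's axiom, so I do not expect serious obstacles beyond getting that step right.
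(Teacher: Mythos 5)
Your proof is correct and follows essentially the same line as the argument the paper cites (from \cite[Proposition 5.4]{KM}, adapted to Baire spaces): the sets $A_{n,m}$ are the natural absorbing family, and the Baire-category step of writing $X=\bigcup_m C_m$ and locating a member of $\mathcal{S}$ inside the interior of some $C_m$ is exactly the intended key point. All verifications (closedness, nowhere density via Tychonoff separation, boundedness of $\{f(x):f\in K\}$) are carried out correctly.
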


\begin{prop}\label{density_topologies}
Let $\tau$ be a (Tychonoff) topology on the real line $\mathbb{R}$ with the following properties:
\begin{enumerate}[(a)]
\item $\tau$ is stronger than the Euclidean topology $\tau_e$ on $\mathbb{R}$;
\item every nonempty $U\in\tau$ contains a nonempty $V\in\tau_e$;
\item every set $\{x_n: n\in\omega\}\subset \mathbb{R}$ such that the sequence $(x_n)$ converges in $\tau_e$ is closed in $\tau$.
\end{enumerate}
Then the space $(\mathbb{R},\tau)$ does not have the property $(\kappa)$, but all its bounded subsets are finite.
\end{prop}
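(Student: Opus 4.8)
The plan is to prove the two assertions independently: failure of $(\kappa)$ follows from hypotheses (a),(b) via Proposition~\ref{B_przelicz} and Theorem~\ref{main1}, while finiteness of all bounded sets follows from (a),(c) together with complete regularity of $(\mathbb{R},\tau)$.

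For ``$(\mathbb{R},\tau)$ does not have the property $(\kappa)$'': by Theorem~\ref{main1} it suffices to show that $C_p((\mathbb{R},\tau))$ has the property (B), and I would deduce this from Proposition~\ref{B_przelicz}. First I would verify that $(\mathbb{R},\tau)$ is a Baire space. Given $\tau$-dense $\tau$-open sets $G_n$ and a nonempty $\tau$-open set $U$, use (b) to pick a nonempty Euclidean-open $V_0\subseteq U$, and then recursively nonempty Euclidean-open sets $V_n$ with $\overline{V_n}^{\tau_e}\subseteq G_n\cap V_{n-1}$ and $\diam V_n<1/n$; this is possible because $G_n\cap V_{n-1}$ is a nonempty $\tau$-open set, hence contains a nonempty Euclidean-open set by (b). Completeness of the Euclidean metric then yields a point of $U\cap\bigcap_n G_n$. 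Next, fixing for each pair of rationals $p<q$ a countably infinite set $D_{p,q}\subseteq(p,q)$, the countable family $\mathcal{S}=\{D_{p,q}:p,q\in\mathbb{Q},\ p<q\}$ has the property that every nonempty $\tau$-open set, containing a nonempty Euclidean-open set (by (b)) and hence a rational interval, contains a member of $\mathcal{S}$. Proposition~\ref{B_przelicz} then gives that $C_p((\mathbb{R},\tau))$ has (B), and Theorem~\ref{main1} yields that $(\mathbb{R},\tau)$ lacks $(\kappa)$.

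For ``all bounded subsets of $(\mathbb{R},\tau)$ are finite'': let $A\subseteq\mathbb{R}$ be infinite; I would construct a $\tau$-continuous $f\colon\mathbb{R}\to\mathbb{R}$ with $f(A)$ unbounded. If $A$ is unbounded in the Euclidean sense, the identity map (which is $\tau$-continuous by (a)) already works. Otherwise $A$ has a Euclidean accumulation point $p$, so choose an injective sequence $(a_n)$ in $A\setminus\{p\}$ that $\tau_e$-converges to $p$ and set $D=\{a_n:n\in\omega\}$. By (c) the set $D$ is $\tau$-closed, so regularity of $(\mathbb{R},\tau)$ furnishes disjoint $\tau$-open sets $O\ni p$ and $G\supseteq D$. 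Now choose pairwise disjoint Euclidean-open intervals $B_n\ni a_n$ whose Euclidean closures are pairwise disjoint and shrink to $\{p\}$ (so in particular $p\notin\overline{B_n}^{\tau_e}$ and $a_m\notin\overline{B_n}^{\tau_e}$ for $m\neq n$), and put $U_n=G\cap B_n$. Each $U_n$ is $\tau$-open with $a_n\in U_n\subseteq B_n$ and $U_n\cap O=\emptyset$; a short case analysis --- $x=p$ uses the neighbourhood $O$, and $x\neq p$ uses a sufficiently small Euclidean ball around $x$, which meets at most the one $B_n$ whose closure contains $x$ --- shows that $\{U_n:n\in\omega\}$ is a discrete family in $(\mathbb{R},\tau)$. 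Consequently the closures $\overline{U_n}^{\tau}$ form a discrete family of pairwise disjoint $\tau$-closed sets. Using complete regularity, pick $\tau$-continuous $\psi_n\colon\mathbb{R}\to[0,1]$ with $\psi_n(a_n)=1$ and $\psi_n$ vanishing off $U_n$. Then $f=\sum_n n\psi_n$ is well defined and $\tau$-continuous, since every point has a $\tau$-neighbourhood meeting at most one $\overline{U_n}^{\tau}$, on which all but one summand vanish; and $f(a_n)=n$ because $a_n\notin\overline{U_m}^{\tau}$ for $m\neq n$. Hence $f(A)$ is unbounded, so $A$ is not bounded in $(\mathbb{R},\tau)$.

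I do not expect a genuine obstacle here: both halves are assembled from routine Baire-category and Urysohn-type arguments. The single point that requires care is the verification that the family $\{U_n\}$ constructed in the second part is discrete at the point $p$; this is precisely where hypothesis (c) is used, since the $\tau$-open neighbourhood of $p$ disjoint from all $U_n$ is the set $O$ produced by separating $p$ from the $\tau$-closed set $D$.
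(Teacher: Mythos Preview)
Your proof is correct and follows essentially the same strategy as the paper's. Both halves match: you verify Baire-ness and apply Proposition~\ref{B_przelicz} together with Theorem~\ref{main1} for the failure of $(\kappa)$, and for the second part you separate the limit point from the $\tau$-closed range of a convergent sequence and assemble a locally finite sum of bump functions. The only differences are cosmetic: the paper shows Baire-ness by taking Euclidean interiors of the dense open sets and invoking the classical Baire theorem (rather than a direct nested-interval construction), chooses the convergent subsequence to be monotone so that consecutive gap intervals $I_n$ play the role of your $B_n$, and produces the separating neighbourhood of the limit as $f^{-1}([0,1/2))$ for a Urysohn function $f$ rather than via regularity; the resulting unbounded function is $f+\sum f_n$ instead of your $\sum n\psi_n$.
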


\begin{proof}
First, we will check that $(\mathbb{R},\tau)$ is a Baire space. Let $U_n, n\in \omega$, be open dense in $(\mathbb{R},\tau)$. For $n\in \omega$, denote by $V_n$ the interior of $U_n$ in $(\mathbb{R},\tau_e)$. Using properties (a) and (b) one can easily verify that each set $V_n$ is dense in $(\mathbb{R},\tau_e)$. By the Baire category theorem the intersection $\bigcap_{n\in \omega} V_n\subseteq \bigcap_{n\in \omega} U_n$ is dense in $(\mathbb{R},\tau_e)$, and by property (b) is also dense in $(\mathbb{R},\tau)$.

Let $\mathcal{S}$ be a countable base of $(\mathbb{R},\tau_e)$. By property (b) the family $\mathcal{S}$ satisfies the assumption from Proposition \ref{B_przelicz}, hence the space $C_p((\mathbb{R},\tau))$ has the property (B). From Theorem \ref{main1} we conclude that $(\mathbb{R},\tau)$ does not have the property $(\kappa)$.

It remains to verify that each infinite subset $A$ of $\mathbb{R}$ is unbounded in $\tau$.

If $A$ is unbounded in the usual sense in $\mathbb{R}$ then by property (a) the identity function on $\mathbb{R}$ witnesses that $A$ is unbounded in $\tau$.

If $A$ is bounded (in the usual sense), then we can find a strictly monotone sequence $(x_n)$ of points of $A$ converging in $(\mathbb{R},\tau_e)$ to a point $x$. By property (c) the set $F = \{x_n: n\in\omega\}$ is closed in $(\mathbb{R},\tau)$. Take a continuous function $f: (\mathbb{R},\tau) \to [0,1]$ such that $f(x) = 0$ and $f(F) = \{1\}$.

For each $n\in \omega$, pick a point $y_n$ strictly between $x_n$ and $x_{n+1}$ and denote by $I_n$, for $n\ge 1$, an open interval with endpoints $y_{n-1}$ and $y_n$. For $n\ge 1$, put $U_n = I_n\cap f^{-1}((1/2,1])$, and find a continuous function $f_n: (\mathbb{R},\tau) \to [0,n]$ such that $f_n(x_n) = n$ and $f_n(\mathbb{R}\setminus U_n) = \{0\}$.

Finally, define $g: (\mathbb{R},\tau) \to \mathbb{R}$ by
$$g = f + \sum_{n=1}^{\infty} f_n.$$
The function $g$ is well defined since the supports of functions $f_n$ are pairwise disjoint. It is $\tau$--continuous at any point $y\ne x$ because on a sufficiently small neighborhood of $y$ it is a sum of at most 3  $\tau$--continuous functions. The function $g$ is also $\tau$--continuous at $x$ since it coincides with $f$ on a $\tau$--neighborhood $f^{-1}([0,1/2))$ of $x$.

For $n\ge 1$, we have $g(x_n) = n+1$, hence $g$ witnesses that the set $A$ is unbounded.
\end{proof}

To answer Tkachuk's question in the negative, it remains to point out examples of topologies on $\mathbb{R}$ satisfying the assumptions of Proposition \ref{density_topologies}. As such, certain topologies lying strictly between the Euclidean topology and the density topology on $\mathbb{R}$, introduced by O'Malley in \cite{Om1} or \cite{Om2} may serve.\smallskip

For the convenience of the reader let us recall the definitions of some density type topologies on the real line $\mathbb{R}$. 
 Let $A \subset \RR$ be a set measurable with respect to the Lebesgue measure $\lambda$ and let $z \in \RR$. The density of the set $A$ in point $z$ is the number
\[
\dens(A,z) = \lim_{\epsilon \to 0} \frac{\lambda(A \cap [z - \epsilon, z + \epsilon])}{2\epsilon}
\]
if such a limit exists. 

The classical density topology $d$ on $\mathbb{R}$ consists of all measurable subsets of $\mathbb{R}$ which have density $1$ in all its points.

A set $U \in d$ is called {\it almost open} if  its measure is the same as the measure of its Euclidean interior, i.e. $\lambda(U \triangle \intt_{\tau_e}(U)) = 0$. O'Malley proved in \cite{Om1} that the collection of all almost open sets is a Tychonoff topology (cf.  \cite[Corollary 3.5]{Om1}) and called it the {\it almost open} topology $\tau_{a.e.}$. It is clear that the almost open topology is stronger than the Euclidean topology on $\RR$ and satisfies condition (b) from  Proposition \ref{density_topologies}. One can easily verify that it also satisfies condition (c) from this proposition (cf. \cite[Proposition 2.4]{Om1}).

In \cite{Om2} O'Malley defined another topology on $\mathbb{R}$ weaker than the density topology $d$, denoted by $r$, which has a base consisting of all elements of $d$ which are simultaneously $F_\sigma$ and $G_\delta$ in the Euclidean topology.  He proved that this topology is Tychonoff and lies strictly between the topologies $\tau_{a.e.}$ and $d$. Since $r$ is stronger than  $\tau_{a.e.}$ it satisfies condition (c) from  Proposition \ref{density_topologies}. O'Malley also proved that it satisfies condition (b) from this proposition (cf. \cite[Theorem 3.3]{Om2}).

\bibliographystyle{siam}
\bibliography{bib.bib}
\end{document}